\newcommand{\ve}{{\bf e}}
\begin{document}
\large

\title{Standard Dual Quaternion Optimization and Its Applications in Hand-Eye Calibration and SLAM}
\author{ Liqun Qi\footnote{Department of Applied Mathematics, The Hong Kong Polytechnic University, Hung Hom, Kowloon, Hong Kong;
    Department of Mathematics, School of Science, Hangzhou Dianzi University, Hangzhou 310018 China
    ({\tt maqilq@polyu.edu.hk}).}
}
\date{\today}
\maketitle

\begin{abstract}
Several common dual quaternion functions, such as the power function, the magnitude function, the $2$-norm function and the $k$th largest eigenvalue of a dual quaternion Hermitian matrix, are standard dual quaternion functions, i.e., the standard parts of their function values depend upon only the standard parts of their dual quaternion variables.  Furthermore, the sum, product, minimum, maximum and composite functions of two standard dual functions, the logarithm and the exponential of standard unit dual quaternion functions, are still standard dual quaternion functions. On the other hand, the dual quaternion optimization problem, where objective and constraint function values are dual numbers but variables are dual quaternions, naturally arises from applications.     We show that to solve an equality constrained dual quaternion optimization problem, we only need to solve two quaternion optimization problems. If the involved dual quaternion functions are all standard, the optimization problem is called a standard dual quaternion optimization problem, and some better results hold.  Then, we show that the dual quaternion optimization problems arising from the hand-eye calibration problem and the simultaneous localization and mapping (SLAM) problem are equality constrained standard dual quaternion optimization problems.

\medskip


  \textbf{Key words.} Standard dual quaternion functions, dual quaternion optimization, quaternion optimization, hand-eye calibration, simultaneous localization and mapping.

\end{abstract}

\renewcommand{\Re}{\mathds{R}}
\newcommand{\rank}{\mathrm{rank}}
\renewcommand{\span}{\mathrm{span}}
\newcommand{\X}{\mathcal{X}}
\newcommand{\A}{\mathcal{A}}
\newcommand{\I}{\mathcal{I}}
\newcommand{\B}{\mathcal{B}}
\newcommand{\C}{\mathcal{C}}
\newcommand{\OO}{\mathcal{O}}
\newcommand{\e}{\mathbf{e}}
\newcommand{\0}{\mathbf{0}}
\newcommand{\dd}{\mathbf{d}}
\newcommand{\ii}{\mathbf{i}}
\newcommand{\jj}{\mathbf{j}}
\newcommand{\kk}{\mathbf{k}}
\newcommand{\va}{\mathbf{a}}
\newcommand{\vb}{\mathbf{b}}
\newcommand{\vc}{\mathbf{c}}
\newcommand{\vg}{\mathbf{g}}
\newcommand{\vr}{\mathbf{r}}
\newcommand{\vt}{\rm{vec}}
\newcommand{\vx}{\mathbf{x}}
\newcommand{\vy}{\mathbf{y}}
\newcommand{\vu}{\mathbf{u}}
\newcommand{\vv}{\mathbf{v}}
\newcommand{\y}{\mathbf{y}}
\newcommand{\vz}{\mathbf{z}}
\newcommand{\T}{\top}

\newtheorem{Thm}{Theorem}[section]
\newtheorem{Def}[Thm]{Definition}
\newtheorem{Ass}[Thm]{Assumption}
\newtheorem{Lem}[Thm]{Lemma}
\newtheorem{Prop}[Thm]{Proposition}
\newtheorem{Cor}[Thm]{Corollary}
\newtheorem{example}[Thm]{Example}
\newtheorem{remark}[Thm]{Remark}

\section{Introduction}
Dual quaternions have wide applications in robotics, 3D motion modelling and control, and computer graphics \cite{ACVL17, BK20, BLH19, CKJC16, Da99, Ke12, LLB13, LWW10, LLDL18, PSG19, WYL12}.

According to \cite{QLY22}, the magnitudes of dual quaternions and the $2$-norms of dual quaternion vectors are dual numbers.  A total order was defined for dual numbers in \cite{QLY22}.   Thus, dual quaternion optimization problems, where objective and constraint function values are dual numbers but variables are dual quaternions, naturally arise. In particular,  the least squares approach results in such optimization problems.   In Section 5, we will present such dual quaternion optimization problems arising from hand-eye calibration.    However, how to solve a dual quaternion optimization problem is still a problem.    First, there is no calculus concepts for dual quaternion functions.   Even the limit concept is not established for dual quaternion functions.  Second, unlike quaternion optimization problems, which can be transformed to one-level real optimization problems, general dual quaternion optimization problems are essentially bilevel optimization problems.  These are difficulties.

Fortunately, some common dual quaternion functions are easy to handle.   Several common dual quaternion functions, such as the power function, the magnitude function, the $2$-norm function and the $k$th largest eigenvalue of a dual quaternion Hermitian matrix, are standard dual quaternion functions.   The standard parts of the function values of these functions depend upon only the standard parts of their dual quaternion variables.  Furthermore, the sum, product, minimum, maximum and composite functions of two standard dual functions, the logarithm and the exponential of a standard unit dual quaternion functions, are still standard dual quaternion functions.  Then we show that to solve an equality constrained dual quaternion optimization problem, we only need to solve two quaternion optimization problems.   For a dual quaternion optimization problem, if its objective and constraint functions are all standard, then we call it a standard dual quaternion optimization problem, and some better results hold there.   Finally, we show that the dual quaternion optimization problems arising from the hand-eye calibration problem are equality constrained standard dual quaternion optimization problems, and thus relatively easier to handle.  This provides a method to solve this problem.

In the next section, we present some preliminary knowledge on dual numbers, dual quaternions and quaternion optimization.  Standard dual quaternion functions are studied in Section 3.   A solution method for the equality constrained dual quaternion optimization problem is proposed, and standard dual quaternion optimization is discussed in Section 4.     In Section 5, we show that equality constrained standard dual quaternion optimization problem has some better properties.   Finally, in Sections 6 and 7,
we show that the dual quaternion optimization problems arising from the hand-eye calibration problem and the simultaneous localization and mapping (SLAM) problem are equality constrained standard dual quaternion optimization problems.

Scalars, vectors and matrices are denoted by small letters, bold small letters and capital letters, respectively.   Dual numbers and dual quaternions are distinguished by a hat symbol.

\section{Dual Numbers, Quaternions and Dual Quaternions}

\subsection{Dual Numbers}

The set of the dual numbers is denoted as $\hat {\mathbb R}$.  Following the literature such as \cite{WYL12}, we use the hat symbol to denote dual numbers and dual quaternions.   A dual number $\hat q$ has the form $\hat q = q + q_d\epsilon$, where $q$ and $q_d$ are real numbers,  and $\epsilon$ is the infinitesimal unit, satisfying $\epsilon^2 = 0$.   The quaternion $q$ is called the real part or the standard part of $\hat q$, and the quaternion $q_d$ is called the dual part or the infinitesimal part of $\hat q$.  The infinitesimal unit $\epsilon$ is commutative in multiplication with real numbers, complex numbers and quaternion numbers.  If $q \not = 0$,  $\hat q$ is said to be appreciable, otherwise,  $\hat q$ is said to be infinitesimal.  

 A total order was introduced in \cite{QLY22} for dual numbers.  Given two dual numbers $\hat p, \hat q \in \hat {\mathbb R}$, $\hat p = p + p_d\epsilon$, $\hat q = q + q_d\epsilon$, where $p$, $p_d$, $q$ and $q_d$ are real numbers, we say that $\hat p \le \hat q$, if either $p < q$, or $p = q$ and $p_d \le q_d$.  In particular, we say that $\hat p$ is positive, nonnegative, nonpositive or negative, if $\hat p > 0$, $\hat p \ge 0$, $\hat p \le 0$ or $\hat p < 0$, respectively.


\subsection{Quaternions}
The set of the quaternions is denoted by $\mathbb Q$. A quaternion $q$ has the form
$q = q_0 + q_1\ii + q_2\jj + q_3\kk,$
where $q_0, q_1, q_2$ and $q_3$ are real numbers, $\ii, \jj$ and $\kk$ are three imaginary units of quaternions, satisfying
$$\ii^2 = \jj^2 = \kk^2 =\ii\jj\kk = -1, ~~\ii\jj = -\jj\ii = \kk, ~~ \jj\kk = - \kk\jj = \ii, ~~\kk\ii = -\ii\kk = \jj.$$
The real part of $q$ is Re$(q) = q_0$.   The imaginary part of $q$ is Im$(q) = q_1\ii + q_2\jj +q_3\kk$.
The multiplication of quaternions satisfies the distribution law, but is noncommutative.

The conjugate of $q = q_0 + q_1\ii + q_2\jj + q_3\kk$ is
$q^* := q_0 - q_1\ii - q_2\jj - q_3\kk.$
The magnitude of $q$ is
$|q| = \sqrt{q_0^2+q_1^2+q_2^2+q_3^2}.$
It follows that the inverse of a nonzero quaternion $q$ is
$q^{-1} = {q^* / |q|^2}.$ For any two quaternions $p$ and $q$, we have $(pq)^* = q^*p^*$.

A quaternion is said imaginary if its real part is zero.  If $q$ is imaginary, then $q^* = -q$.  In the literature \cite{WYL12}, it is called a vector quaternion.   Various 3D vectors, such as position vectors, displacement vectors, linear velocity vectors, and angular velocity vectors, can be represented as imaginary quaternions.   

If $|q|=1$, then $q$ is called a unit quaternion, or a rotation quaternion.  A spatial rotation around a fixed point of $\theta$  radians about a unit axis $(x_1,x_2,x_3)$ that denotes the Euler axis is given by the unit quaternion 
\begin{equation} \label{uq}
q = \cos(\theta /2) + x_1\sin(\theta /2)\ii + x_2\sin(\theta /2)\jj + x_3\sin(\theta /2)\kk = e^{{\theta \over 2}x}.
\end{equation}
where the unit axis $x$ is an imaginary unit quaternion $x = x_1\ii + x_2\jj + x_3\kk$.   We may also write
\begin{equation} \label{lnuq}
\ln q = {\theta \over 2}x.
\end{equation}
A unit quaternion $q$ is always invertible and $q^{-1} = q^*$.


The collection of $n$-dimensional quaternion  vectors is denoted by ${\mathbb {Q}}^n$. For $\vx = (x_1, x_2,\cdots, x_n)^\top, \vy = (y_1, y_2,\cdots, y_n)^\top  \in {\mathbb {Q}}^n$, define $\vx^*\vy = \sum_{i=1}^n x_i^*y_i$, where $\vx^* = (x_1^*, x_2^*,\cdots, x_n^*)$ is the conjugate transpose of $\vx$.

\subsection{Dual Quaternions and Unit Dual Quaternions}

The set of dual quaternions is denoted by $\hat {\mathbb Q}$.   A dual quaternion $\hat q \in \hat {\mathbb Q}$ has the form
\begin{equation} \label{dq}
\hat q = q + q_d\epsilon,
\end{equation}
where $q, q_d \in \mathbb {Q}$ are the standard part and the dual part of $\hat q$, respectively. If $q \not = 0$, then we say that $\hat q$ is appreciable.  If $q$ and $q_d$ are imaginary quaternions, then $\hat q$ is called an imaginary dual quaternion.


The conjugate of $\hat q$ is
\begin{equation} \label{conjugate}
\hat q^* = q^* + q_d^*\epsilon.
\end{equation}
Thus, if $\hat q = \hat q^*$, then $\hat q$ is a dual number.  If $\hat q$ is imaginary, then $\hat q^* = - \hat q$.

The magnitude of $\hat q$ was defined in \cite{QLY22} as
\begin{equation} \label{magnitude}
|\hat q| := \left\{ \begin{aligned} |q| + {(qq_d^*+q_d q^*) \over 2|q|}\epsilon, & \ {\rm if}\  q \not = 0, \\
|q_d|\epsilon, &  \ {\rm otherwise},
\end{aligned} \right.
\end{equation}
which is a dual number.

For two dual quaternions $\hat p = p + p_d \epsilon$ and $\hat q = q + q_d \epsilon$, their addition and multiplications are defined as
$$\hat p+\hat q = \left(p + q\right) + \left(p_d + q_d\right)\epsilon$$
and
$$\hat p\hat q = pq + \left(pq_d + p_d q\right)\epsilon.$$
A dual number is always commutative with a dual quaternion or a dual quaternion vector.

A dual quaternion $\hat q$ is called invertible if there exists a quaternion $\hat p$ such that $\hat p\hat q = \hat q\hat p =1$.  A dual quaternion $\hat q$ is invertible if and only if  $\hat q$ is appreciable. In this case, we have
$$\hat q^{-1} = q^{-1} - q^{-1}q_d q^{-1} \epsilon.$$


If $|\hat q| = 1$, then $\hat q$ is called a unit dual quaternion. A unit dual quaternion $\hat q$ is always invertible and we have ${\hat q}^{-1} = {\hat q}^*$.   The 3D motion of a rigid body can be represented by a unit dual quaternion.
We have
$$\hat q\hat q^* = (q + q_d\epsilon)(q^* + q_d^*\epsilon)= qq^* + (qq_d^* + q_d q^*)\epsilon = \hat q^*\hat q.$$
Thus, $\hat q$ is a unit dual quaternion if and only if $q$ is a unit quaternion, and
\begin{equation} \label{udq1}
qq_d^* + q_d q^* = q^*q_d + q_d^* q=0.
\end{equation}
Suppose that there is a rotation $q \in {\mathbb Q}$ succeeded by a translation $p^b \in {\mathbb Q}$, where $p^b$ is an imaginary quaternion.   The superscripts $b$ and $s$ represent the relation of the rigid body motion with respect to the body frame attached to the rigid body and the spatial frame which is relative to a fixed coordinate frame.
Then the whole transformation can be represented by unit dual quaternion $\hat q = q + q_d \epsilon$, where $q_d = {1 \over 2}qp^b$.   Note that we have
$$qq_d^* + q_d q^* = {1 \over 2}\left[q(p^b)^*q^* + qp^bq^*\right] = {1 \over 2}q\left[(p^b)^*+p^b\right]q^* = 0.$$
Thus, a transformation of a rigid body can be represented by a unit dual quaternion
\begin{equation} \label{udq}
\hat q = q + {\epsilon \over 2}qp^b,
\end{equation}
where $q$ is a unit quaternion to represent the rotation, and $p^b$ is the imaginary quaternion to represent the translation or the position.   
In (\ref{udq}), $q$ is the attitude of the rigid body, while $\hat q$ represents the transformation.


Combining (\ref{udq}) with (\ref{lnuq}), we have
\begin{equation} \label{lnudq}
\ln \hat q = {1 \over 2}(\theta x + \epsilon p^b).
\end{equation}


A unit dual quaternion $\hat q$ serves as both a specification of the configuration of a rigid body and a transformation taking the coordinates of a point from one frame to another via rotation and translation.
In (\ref{udq}), if $\hat q$ is the configuration of the rigid body, then $q$ and $p^b$ are the attitude of and position of the rigid body respectively.




Denote the collection of $n$-dimensional dual quaternion  vectors by ${\hat {\mathbb Q}}^n$.

For $\hat \vx = (\hat x_1, \hat x_2,\cdots, \hat x_n)^\top$, $\hat \vy = (\hat y_1, \hat y_2,\cdots, \hat y_n)^\top  \in {\hat {\mathbb Q}}^n$, define $\hat \vx^*\hat \vy = \sum_{i=1}^n \hat x_i^*\hat y_i$, where $\hat \vx^* = (\hat x_1^*, \hat x_2^*,\cdots, \hat x_n^*)$ is the conjugate transpose of $\hat \vx$.   We say $\hat \vx$ is appreciable if at least one of its component is appreciable.  

For $\hat \vx = (\hat x_1, \hat x_2,\cdots, \hat x_n)^\top$, by \cite{QLY22}, if not all of $\hat x_i$ are infinitesimal, its $2$-norm is defined as
\begin{equation} \label{e9}
\|\hat \vx\|_2 = \sqrt{\sum_{i=1}^n |\hat x_i|^2}.
\end{equation}
If all $\hat x_i$ are infinitesimal, we have $\hat x_i = (x_i)_d \epsilon$ for $i = 1, 2,\ldots, n$.  Then we have
\begin{equation} \label{e10}
\|\hat \vx\|_2 = \sqrt{\sum_{i=1}^n |(x_i)_d |^2} \epsilon.
\end{equation}

\section{Standard Dual Quaternion Functions}

We call a function $\hat f : {\hat {\mathbb Q}}^n \to {\hat {\mathbb R}}$ a dual quaternion function.   Let
$$\hat f(\hat \vx) = f(\hat \vx) + \epsilon f_d(\hat \vx)$$
and
$$\hat \vx = \vx + \epsilon \vx_d,$$
where $f(\hat \vx), f_d(\hat \vx) \in \mathbb Q$, $\vx, \vx_d \in {\mathbb Q}^n$ are the standard parts and dual parts of $\hat f(\hat \vx)$ and $\hat \vx$, respectively.  If $f(\hat \vx) \equiv f(\vx)$ for all $\hat \vx \in {\hat {\mathbb Q}}^n$, then we say that $\hat f$ is a {\bf standard dual quaternion function}.

A simple example is the power function.   Consider the power function $\hat f(\hat x) = (\hat x)^m$ for a positive integer $m$. Let $\hat x = x + \epsilon x_d$.  Then $\hat f(\hat x) = (\hat x)^m = x^m +m\epsilon x^{m-1}x_d$, i.e., $f(\hat x) = x^m = \hat f(x)$.  By (\ref{magnitude}), the magnitude function is also a standard quaternion function.    By (\ref{magnitude}), (\ref{e9}) and (\ref{e10}), the $2$-norm function is also a standard quaternion function.   A fourth example is $\hat f(\hat A) = \hat \lambda_k(\hat A)$, where $\hat A$ is the an $n \times n$ dual quaternion Hermitian matrix, $\hat \lambda_k(\hat A)$ is the $k$th largest eigenvalue of $A$, $1 \le k \le n$.   By Theorem 4.1 of \cite{QL22}, this is also a standard dual quaternion function.

Furthermore, many operations preserve standard dual quaternion functions.

\begin{Thm} \label{t3.1}
Suppose that $\hat f, \hat g : {\hat {\mathbb Q}}^n \to {\hat {\mathbb R}}$ are two standard dual quaternion functions.    Then their sum, product, minimum and maximum functions are still standard dual quaternion functions.
\end{Thm}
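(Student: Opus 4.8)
The plan is to verify the defining property of a standard dual quaternion function separately for each of the four operations, reducing in every case to the standardness already assumed for $\hat f$ and $\hat g$. Throughout I would write $\hat f(\hat \vx) = f(\hat \vx) + \epsilon f_d(\hat \vx)$ and $\hat g(\hat \vx) = g(\hat \vx) + \epsilon g_d(\hat \vx)$ with $\hat \vx = \vx + \epsilon \vx_d$, recalling that the values lie in $\hat{\mathbb R}$, so the standard and dual parts $f(\hat \vx), f_d(\hat \vx), g(\hat \vx), g_d(\hat \vx)$ are all real numbers. The hypothesis supplies $f(\hat \vx) \equiv f(\vx)$ and $g(\hat \vx) \equiv g(\vx)$, i.e. these standard parts depend only on the standard part $\vx$.

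For the sum and the product the argument is purely algebraic. By the addition rule for dual numbers, the standard part of $\hat f + \hat g$ is $f(\hat \vx) + g(\hat \vx)$, which by hypothesis equals $f(\vx) + g(\vx)$ and hence depends only on $\vx$. Multiplying out, $\hat f(\hat \vx)\hat g(\hat \vx) = f(\hat \vx)g(\hat \vx) + \epsilon\left(f(\hat \vx)g_d(\hat \vx) + f_d(\hat \vx)g(\hat \vx)\right)$, so the standard part of the product is $f(\hat \vx)g(\hat \vx) = f(\vx)g(\vx)$, again a function of $\vx$ alone. Thus both are standard.

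The minimum and maximum require the total order on $\hat{\mathbb R}$ recalled in Section 2. The key step I would isolate is that, because that order is lexicographic with the standard part taking precedence, the standard part of $\min\{\hat a, \hat b\}$ equals $\min\{a, b\}$ for any two dual numbers $\hat a = a + a_d\epsilon$ and $\hat b = b + b_d\epsilon$, and similarly for the maximum. Indeed, if $a \ne b$ the comparison is settled by the standard parts alone, and the selected dual number carries the smaller (resp. larger) standard part; if $a = b$ the tie is broken by the dual parts, but then the standard part of the winner is $a = b = \min\{a,b\}$ in either case. Applying this with $\hat a = \hat f(\hat \vx)$ and $\hat b = \hat g(\hat \vx)$, the standard part of $\min\{\hat f, \hat g\}$ at $\hat \vx$ is $\min\{f(\hat \vx), g(\hat \vx)\} = \min\{f(\vx), g(\vx)\}$, which depends only on $\vx$; the maximum is identical.

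The only genuine subtlety, and the step I expect to demand the most care, is this tie-breaking case $f(\vx) = g(\vx)$ for the minimum and maximum: one must confirm that although the comparison is then decided by the dual parts, which do depend on $\vx_d$, the standard part of the chosen value is unaffected, so that standardness is preserved. The sum and product cases are routine once the dual-number arithmetic is written out.
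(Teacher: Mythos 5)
Your proposal is correct and follows essentially the same route as the paper: for each operation, express the standard part of the combined function in terms of the standard parts of $\hat f$ and $\hat g$ and invoke their standardness. The only difference is that you explicitly justify the key fact that the standard part of $\min\{\hat a,\hat b\}$ (resp.\ $\max$) of two dual numbers equals the $\min$ (resp.\ $\max$) of their standard parts under the lexicographic order, a step the paper's proof uses without comment.
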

\begin{proof}  Let $\hat h(\hat \vx) = \hat f(\hat \vx) + \hat g(\hat \vx)$.  Then $\hat h = h + \epsilon h_d$, where
$$h(\hat \vx) = f(\hat \vx) + g(\hat \vx) = f(\vx)+ g(\vx) = h(\vx),$$
i.e., $\hat h$ is also a standard dual quaternion function.  This proves the first conclusion.

Let $\hat h(\hat \vx) = \hat f(\hat \vx)\hat g(\hat \vx)$.  Then $\hat h = h + \epsilon h_d$, where
$$h(\hat \vx) = f(\hat \vx)g(\hat \vx) = f(\vx)g(\vx) = h(\vx),$$
i.e., $\hat h$ is also a standard dual quaternion function.  This proves the second conclusion.

Let $\hat h(\hat \vx) = \min \{ \hat f(\hat \vx), \hat g(\hat \vx) \}$.  Then $\hat h = h + \epsilon h_d$, where
$$h(\hat \vx) = \min \{ f(\hat \vx), g(\hat \vx) \} = \min \{ f(\vx), g(\vx) \} = h(\vx),$$
i.e., $\hat h$ is also a standard dual quaternion function.  This proves the third conclusion.

The fourth conclusion can be proved similarly.

\end{proof}

\begin{Cor} \label{c3.2}
Suppose that $\hat f_1, \cdots, \hat f_m: {\hat {\mathbb Q}}^n \to {\hat {\mathbb R}}$ are $m$ standard dual quaternion functions, where $m$ is a positive integer.    Then their sum, product, minimum and maximum functions are still standard dual quaternion functions.
\end{Cor}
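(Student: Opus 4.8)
The plan is to prove the corollary by induction on $m$, using Theorem \ref{t3.1} as the engine for the inductive step. The base case $m = 1$ is immediate, since a single standard dual quaternion function is trivially standard; alternatively one may start the induction at $m = 2$, which is precisely Theorem \ref{t3.1}.

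For the inductive step, I would assume the statement for any collection of $m-1$ standard dual quaternion functions and then treat the $m$-ary sum, product, minimum and maximum separately, exploiting associativity in each case. For the sum I would write
$$\hat f_1 + \cdots + \hat f_m = \left(\hat f_1 + \cdots + \hat f_{m-1}\right) + \hat f_m.$$
By the induction hypothesis the bracketed function is standard, and then the sum part of Theorem \ref{t3.1}, applied to this function together with $\hat f_m$, shows that the full sum is standard. The product is handled identically, writing $\hat f_1 \cdots \hat f_m = \left(\hat f_1 \cdots \hat f_{m-1}\right)\hat f_m$ and invoking the product part of Theorem \ref{t3.1}.

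For the minimum and maximum I would use the associativity of these operations under the total order on $\hat{\mathbb R}$ introduced in Section 2, so that
$$\min\{\hat f_1, \ldots, \hat f_m\} = \min\left\{\min\{\hat f_1, \ldots, \hat f_{m-1}\}, \hat f_m\right\},$$
and analogously for the maximum. The induction hypothesis makes the inner minimum a standard dual quaternion function, and the minimum part of Theorem \ref{t3.1} then finishes the argument; the maximum is the same.

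I do not expect any genuine obstacle here: the entire content is the reduction of each $m$-ary operation to a nested composition of binary operations, for which associativity is the only structural ingredient, with Theorem \ref{t3.1} supplying one-step preservation of standardness. The only point warranting a moment's care is confirming that the minimum and maximum over dual numbers are associative, which follows directly from the fact that the order defined in Section 2 is a total order on $\hat{\mathbb R}$.
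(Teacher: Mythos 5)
Your proof is correct and matches the paper's intent: the paper states this corollary without proof, as the immediate induction on $m$ from Theorem \ref{t3.1} that you spell out, with associativity of sum, product, and of min/max under the total order on $\hat{\mathbb R}$ being the only ingredients needed.
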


\begin{Cor} \label{c3.3}
Suppose that $\hat f: {\hat {\mathbb Q}}^n \to {\hat {\mathbb R}}$ is a standard dual quaternion functions, and $m$ is a positive integer.    Then $(\hat f)^m$ is still a standard dual quaternion function.
\end{Cor}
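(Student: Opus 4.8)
The plan is to observe that $(\hat f)^m$ is nothing but the product of $m$ copies of the single standard dual quaternion function $\hat f$, and then to invoke Corollary \ref{c3.2} directly. Concretely, I would set $\hat f_1 = \hat f_2 = \cdots = \hat f_m = \hat f$ in the statement of Corollary \ref{c3.2}; since each $\hat f_i$ is by hypothesis a standard dual quaternion function, their product $\hat f_1 \hat f_2 \cdots \hat f_m = (\hat f)^m$ is again standard. This reduces the corollary to the product case already established in Theorem \ref{t3.1} and its iteration, and requires no new argument.

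Alternatively, one can argue by direct computation, paralleling the power-function example preceding Theorem \ref{t3.1}. Writing $\hat f(\hat \vx) = f(\hat \vx) + \epsilon f_d(\hat \vx)$ and using $\epsilon^2 = 0$ together with the commutativity of the scalar function values, the binomial expansion collapses to
$$(\hat f(\hat \vx))^m = (f(\hat \vx))^m + m\, \epsilon\, (f(\hat \vx))^{m-1} f_d(\hat \vx),$$
so the standard part of $(\hat f)^m$ is exactly $(f(\hat \vx))^m$. Because $\hat f$ is standard, $f(\hat \vx) = f(\vx)$, whence $(f(\hat \vx))^m = (f(\vx))^m$; that is, the standard part of $(\hat f)^m$ depends only on $\vx$, which is precisely the defining property of a standard dual quaternion function.

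There is essentially no obstacle here: the result is an immediate specialization of the product rule of Corollary \ref{c3.2}. The only point worth checking in the direct approach is that the higher-order terms in $\epsilon$ genuinely vanish, which follows from $\epsilon^2 = 0$, and that the order of the factors is immaterial, which holds because the values lie in the commutative dual numbers $\hat {\mathbb R}$ rather than in the noncommutative dual quaternions. I would therefore present the one-line proof via Corollary \ref{c3.2} as the main argument, mentioning the explicit expansion only as a sanity check.
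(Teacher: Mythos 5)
Your proposal is correct and matches the paper's intent exactly: the paper states Corollary \ref{c3.3} without proof as an immediate consequence of Corollary \ref{c3.2}, which is precisely your main argument of taking $\hat f_1 = \cdots = \hat f_m = \hat f$ in the product case. Your supplementary direct expansion $(\hat f)^m = f^m + m\epsilon f^{m-1} f_d$ (valid because the values lie in the commutative dual numbers) is a fine sanity check but not needed.
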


\begin{Thm} \label{t3.4}
Suppose that $\hat f: \hat {\mathbb Q} \to {\hat {\mathbb R}}$ is a standard unit dual quaternion function.     Then its logarithm and exponential functions are still standard dual quaternion functions.
\end{Thm}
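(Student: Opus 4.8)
The plan is to prove both assertions by the same device: show that the standard part of the dual quaternion obtained by applying the operation (logarithm or exponential) equals the corresponding \emph{quaternion} operation applied to the standard part of $\hat f$, and is therefore independent of the dual part. Since $\hat f$ is standard, its standard part depends only on the standard part of the argument, so the composite inherits exactly the defining property of a standard dual quaternion function.

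First I would treat the exponential. Writing $\hat f(\hat x) = f(\hat x) + \epsilon f_d(\hat x)$ and using the series $\exp(\hat w) = \sum_{m=0}^\infty \hat w^m/m!$, I would expand each power $(f + \epsilon f_d)^m$. Because $\epsilon$ is commutative with quaternions and $\epsilon^2 = 0$, only terms linear in $f_d$ survive in the dual part, so $(f+\epsilon f_d)^m = f^m + \epsilon\sum_{j=0}^{m-1} f^j f_d\, f^{m-1-j}$, whose standard part is simply $f^m$. Summing, the standard part of $\exp \hat f$ is $\exp(f)$, the quaternion exponential of the standard part of $\hat f$. Since $\hat f$ is standard, $f(\hat x) \equiv f(x)$, hence the standard part of $\exp \hat f$ depends only on $x$, which is precisely the definition of a standard dual quaternion function.

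Next I would treat the logarithm, where the unit hypothesis enters. A unit-dual-quaternion value of $\hat f$ can be written, by (\ref{udq}), as $\hat f(\hat x) = q + \frac{\epsilon}{2} q p^b$ with $q$ a unit quaternion and $p^b$ an imaginary quaternion, and by (\ref{lnudq}) its logarithm is $\ln \hat f(\hat x) = \frac12\bigl(\theta x_{\mathrm{axis}} + \epsilon p^b\bigr)$, where $q = e^{(\theta/2) x_{\mathrm{axis}}}$ by (\ref{uq}). Thus the standard part of $\ln \hat f$ is $\frac{\theta}{2} x_{\mathrm{axis}} = \ln q$ by (\ref{lnuq}), i.e. the quaternion logarithm of the standard part $q$ of $\hat f$. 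Standardness of $\hat f$ gives $q = q(x)$, so the standard part of $\ln \hat f$ depends only on $x$, and $\ln \hat f$ is standard.

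The main obstacle I anticipate is justifying cleanly that the standard part of the dual operation equals the quaternion operation on the standard part, independently of the dual part. For the exponential this rests on the $\epsilon^2 = 0$ truncation of the power series; here I would flag that noncommutativity of quaternion multiplication affects only the dual part (the coefficient $\sum_j f^j f_d f^{m-1-j}$ rather than $m f^{m-1} f_d$) and leaves the standard part $f^m$ untouched, so the argument is unaffected. For the logarithm it reduces to reading off the standard part directly from (\ref{lnudq}). A secondary point to state carefully is the domain on which $\ln$ is well defined, namely appreciable and indeed unit values, which is exactly why the unit hypothesis appears in the statement.
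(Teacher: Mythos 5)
Your proof of the logarithm half is essentially the paper's own argument: both write the unit value as $\hat f(\hat x) = f(x) + \frac{\epsilon}{2}f(x)p^b$ via (\ref{udq}), read off $\ln \hat f(\hat x) = \frac{1}{2}(\theta y + \epsilon p^b)$ from (\ref{lnudq}), and observe that the standard part $\frac{\theta}{2}y = \ln f(x)$ depends only on $x$ by standardness of $\hat f$. Where you genuinely differ is the exponential: the paper dismisses it with ``similarly,'' whereas you give an explicit power-series argument, expanding $(f+\epsilon f_d)^m = f^m + \epsilon\sum_{j=0}^{m-1}f^j f_d\, f^{m-1-j}$ using $\epsilon^2 = 0$ and commutativity of $\epsilon$, and concluding that the standard part of $\exp \hat f$ is $\exp(f) = \exp(f(x))$. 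This buys two things: your exponential argument needs no unit hypothesis at all (it works for any standard dual quaternion function, since truncation at first order in $\epsilon$ is purely algebraic), and it makes precise what ``similarly'' would have to mean, since (\ref{lnudq}) is a statement about logarithms of unit dual quaternions and does not directly transfer to exponentials. Your side remarks are also sound: noncommutativity only perturbs the dual-part coefficient (giving $\sum_j f^j f_d f^{m-1-j}$ instead of $m f^{m-1}f_d$) and leaves the standard part untouched, and the unit hypothesis is exactly what guarantees $\ln$ is defined on the values of $\hat f$.
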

\begin{proof} We have
$$\hat f(\hat x) = f(\hat x) + \epsilon f_d(\hat x) = f(x) + \epsilon f_d(\hat x),$$
as $\hat f$ is a standard dual quaternion function.  Since $f$ is a unit dual quaternion function, we have $|f(x)|=1$.  Thus, we may write
$$\hat f(\hat x) = f(x) + {\epsilon \over 2}f(x)p^b.$$
Write
$$f(x) = \cos(\theta /2) + y_1\sin(\theta /2)\ii + y_2\sin(\theta /2)\jj + y_3\sin(\theta /2)\kk.$$
Then $\theta /2$ and $y = y_1\ii + y_2\jj + y_3\kk$ are functions of $x$.
By (\ref{lnudq}), we have
$$\ln \hat f(\hat x) = {1 \over 2}(\theta y + \epsilon p^b).$$
Thus, $\ln \hat f$ is still a standard dual quaternion function.   Similarly, we may show that $e^{\hat f}$ is a standard dual quaternion function too.
\end{proof}
\begin{Thm} \label{t3.5}
Suppose that $\hat f: \hat {\mathbb Q} \to {\hat {\mathbb R}}$ and $\hat g: \hat {\mathbb Q}^n \to {\hat {\mathbb R}}$ are two standard unit dual quaternion functions.    Then their composite function $\hat h = \hat f \circ g :  \hat {\mathbb Q}^n \to {\hat {\mathbb R}}$ is also a standard dual quaternion function.
\end{Thm}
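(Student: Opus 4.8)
The plan is to derive the result directly from the definition of a standard dual quaternion function, exploiting the principle that, when every factor is standard, the standard part of a composition equals the composition of the standard parts. First I would fix $\hat \vx = \vx + \epsilon \vx_d \in \hat{\mathbb Q}^n$ and write $\hat h = h + \epsilon h_d$, where $h$ is the standard part of $\hat h = \hat f \circ \hat g$. Since the standard part function of the composition is $h = f \circ g$, proving that $\hat h$ is standard amounts to showing $h(\hat \vx) = f(g(\vx))$ for every $\hat \vx$, i.e.\ that the standard part of $\hat h(\hat \vx)$ is insensitive to the dual part $\vx_d$.

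The key steps would then be two applications of the standardness hypothesis, one for each factor. First, because $\hat g$ is standard, I would write $\hat g(\hat \vx) = g(\hat \vx) + \epsilon g_d(\hat \vx) = g(\vx) + \epsilon g_d(\hat \vx)$, so the standard part of $\hat y := \hat g(\hat \vx)$ is $y = g(\vx)$, which depends on $\vx$ alone. Next, since $\hat y$ is an admissible argument of $\hat f$ and $\hat f$ is standard, I would write $\hat f(\hat y) = f(\hat y) + \epsilon f_d(\hat y) = f(y) + \epsilon f_d(\hat y)$, whose standard part is $f(y) = f(g(\vx))$. Chaining the two gives $h(\hat \vx) = f(g(\vx)) = h(\vx)$, which is exactly the defining property of a standard dual quaternion function.

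The computation itself is routine; the point that needs care is that the composition $\hat f \circ \hat g$ is well defined. Here the values of $\hat g$ lie in $\hat{\mathbb R} \subset \hat{\mathbb Q}$, so they may be fed into $\hat f$, and the unit hypothesis guarantees that these values are unit dual quaternions, hence legitimate arguments of $\hat f$ (in particular when $\hat f$ is built from the logarithm or exponential of Theorem \ref{t3.4}). Once well-definedness is secured, the only substantive ingredient is that each factor discards the dual part of its argument when forming its standard part; this is precisely what the two standardness hypotheses supply, so I expect no real obstacle beyond bookkeeping of the standard and dual parts through the composition.
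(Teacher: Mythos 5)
Your proof is correct and takes essentially the same route as the paper: the paper's own argument is precisely the chain $h(\hat \vx) = f(\hat g(\hat \vx)) = f(g(\hat \vx)) = f(g(\vx)) = h(\vx)$, i.e.\ one application of the standardness of $\hat f$ followed by one application of the standardness of $\hat g$, which is exactly your two-step bookkeeping of standard parts. Your additional remark on well-definedness of the composition (values of $\hat g$ being unit dual quaternions, hence admissible arguments of $\hat f$) is a reasonable point of care that the paper leaves implicit.
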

\begin{proof}  For $\hat \vx \in {\mathbb Q}^n$,
$$h(\hat \vx) = f(\hat g(\hat \vx)) = f(g(\hat \vx)) = f(g(\vx)) = h(\vx),$$
where the second and the third equalities hold because $\hat f$ and $\hat g$ are standard unit dual quaternion functions respectively.   Thus, $\hat h$ is also a standard dual quaternion function.
\end{proof}

\section{Equality Constrained Dual Quaternion Optimization}

In this section, we propose a solution method for the following equality constrained dual quaternion optimization problem ({\bf EQDQO}):
\begin{equation} \label{e22a}
\min \{ \hat f(\hat \vx) :  \hat h_j(\hat \vx) = 0, \ j = 1, \cdots, m  \},
\end{equation}
where $\hat \vx \in {\hat {\mathbb Q}}^n$,  $\hat f$ and $\hat h_j$ for $j = 1, \cdots, m$ are dual quaternion functions.  If $\hat f$ and $\hat h_j$ for $j = 1, \cdots, m$ are all standard dual quaternion functions, then this problem is called an equality constrained standard dual quaternion optimization problem ({\bf EQSDQO}).
 {\bf EQSDQO} arises in many applications, as the most dual quaternion application problems are unit dual quaternion application problems, and their constraints may only have the unit requirement.   In the next section, we will see that the dual quaternion optimization problem arising from the hand-eye calibration problem is an {\bf EQSDQO}.

 We have $\hat f(\hat \vx) = f(\vx + \vx_d\epsilon) + f_d(\vx + \vx_d\epsilon)\epsilon$.  We may regard $f$ and $f_d$ as real valued functions $\bar f$ and $\bar f_d$, with $2n$-dimensional quaternion vector variables $(\vx, \vx_d)$, i.e.,
 $$\bar f(\vx, \vx_d) \equiv f(\vx + \vx_d\epsilon), \ \bar f_d(\vx, \vx_d) \equiv f_d(\vx + \vx_d\epsilon).$$
 We now abuse the notation, and simply write
 $$f(\vx, \vx_d) \equiv f(\vx + \vx_d\epsilon), \ f_d(\vx, \vx_d) \equiv f_d(\vx + \vx_d\epsilon),$$
 i.e., we now simply regard $f$ and $f_d$ as real valued functions, with $2n$-dimensional quaternion vector variables $(\vx, \vx_d)$.   We may treat $h_j$ and $(h_j)_d$ similarly.   Then we have the following theorem.

\begin{Thm} \label{t4.1}
Suppose that $L^{I}$ is the optimal function value of the equality constrained quaternion optimization problem ({\bf EQQOPI}):
\begin{equation} \label{e12}
\min \{ f(\vx, \vx_d) : h_j(\vx, \vx_d) = 0, (h_j)_d(\vx, \vx_d) = 0, \ j = 1, \cdots, m  \},
\end{equation}
and $(\vx^{opt}, \vx_d^{opt})$ is a global optimal solution of the following equality constrained quaternion optimization problem ({\bf EQQOPII}):
\begin{equation} \label{e13}
\min \{ f_d(\vx, \vx_d) : f(\vx, \vx_d) = L^{I},  \ h_j(\vx, \vx_d) = 0, \ (h_j)_d(\vx, \vx_d) = 0, \ j = 1, \cdots, m  \}.
\end{equation}
Then $\hat \vx^{opt} = \vx^{opt} + \epsilon \vx_d^{opt}$ is a global optimal solution of {\bf EQDQO} (\ref{e22a}).
\end{Thm}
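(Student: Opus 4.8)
The plan is to recognize that the equality constraints together with the total order on dual numbers turn \textbf{EQDQO} into a lexicographic minimization, which splits precisely into \textbf{EQQOPI} followed by \textbf{EQQOPII}. First I would pin down the feasible sets. For each $j$, the constraint $\hat h_j(\hat \vx) = 0$ reads $h_j(\vx, \vx_d) + \epsilon (h_j)_d(\vx, \vx_d) = 0$, which by equality of dual numbers is equivalent to the pair of real-valued equations $h_j(\vx, \vx_d) = 0$ and $(h_j)_d(\vx, \vx_d) = 0$. Hence the set of pairs $(\vx, \vx_d)$ feasible for \textbf{EQDQO} coincides with the feasible set of \textbf{EQQOPI}, and the feasible set of \textbf{EQQOPII} is exactly its intersection with the level set $f(\vx, \vx_d) = L^I$. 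In particular $(\vx^{opt}, \vx_d^{opt})$ satisfies every $\hat h_j(\hat \vx^{opt}) = 0$, so $\hat \vx^{opt}$ is feasible for \textbf{EQDQO}.

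Next I would compare objective values using the total order recalled in Section 2.1. Let $\hat \vx = \vx + \epsilon \vx_d$ be any feasible point of \textbf{EQDQO}; its objective value is the dual number $\hat f(\hat \vx) = f(\vx, \vx_d) + \epsilon f_d(\vx, \vx_d)$, while $\hat f(\hat \vx^{opt}) = L^I + \epsilon f_d(\vx^{opt}, \vx_d^{opt})$, since $(\vx^{opt}, \vx_d^{opt})$ attains the standard-part optimum $L^I$. Because $(\vx, \vx_d)$ is feasible for \textbf{EQQOPI}, we have $f(\vx, \vx_d) \ge L^I$, and I would split into two cases. If $f(\vx, \vx_d) > L^I$, the standard part of $\hat f(\hat \vx)$ strictly exceeds that of $\hat f(\hat \vx^{opt})$, so $\hat f(\hat \vx^{opt}) < \hat f(\hat \vx)$ by the first clause of the total order. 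If $f(\vx, \vx_d) = L^I$, then $(\vx, \vx_d)$ is feasible for \textbf{EQQOPII}, and the global optimality of $(\vx^{opt}, \vx_d^{opt})$ there gives $f_d(\vx^{opt}, \vx_d^{opt}) \le f_d(\vx, \vx_d)$; since the standard parts now agree, the second clause of the total order yields $\hat f(\hat \vx^{opt}) \le \hat f(\hat \vx)$. In both cases $\hat f(\hat \vx^{opt}) \le \hat f(\hat \vx)$, which is exactly the assertion that $\hat \vx^{opt}$ is a global optimal solution of \textbf{EQDQO}.

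Since the argument is essentially a matter of unwinding the definitions of dual-number equality and of the dual-number total order, I do not expect a serious technical obstacle. The one point requiring care is the case analysis on the standard part, where the two clauses of the total order must be invoked separately and the borderline case $f(\vx, \vx_d) = L^I$ must be routed through \textbf{EQQOPII}. A secondary modeling point worth flagging is that the formulation tacitly assumes the standard-part minimum $L^I$ is attained, so that the feasible set of \textbf{EQQOPII} is nonempty and admits the global minimizer $(\vx^{opt}, \vx_d^{opt})$; this is precisely the hypothesis of the theorem and is what makes the lexicographic split well-posed.
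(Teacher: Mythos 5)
Your proof is correct and follows essentially the same route as the paper: the paper's own (one-sentence) proof asserts feasibility of $\hat \vx^{opt}$ and that the standard and dual parts of $\hat f$ attain their minima, which is exactly the lexicographic argument you spell out. Your version merely makes explicit what the paper leaves implicit — the equivalence of $\hat h_j(\hat\vx)=0$ with the pair of quaternion equations, and the two-case comparison under the total order on dual numbers — so it is a faithful, fully detailed rendering of the paper's argument.
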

\begin{proof}
Since $\hat \vx^{opt} = \vx^{opt} + \epsilon \vx_d^{opt}$ satisfies all the constraints of  {\bf EQDQO}, and the standard part and the dual part of $f$ attain their global minimal values, the conclusion follows.
\end{proof}

Let $\vx = \vx_0 + \vx_1\ii + \vx_2\jj + \vx_3\kk$ and $\vx_d = \vx_4 + \vx_5\ii + \vx_6\jj + \vx_7\kk$.  Then we may further regard $f(\vx, \vx_d)$, $f_d(\vx, \vx_d)$, $h_j(\vx, \vx_d)$ and $(h_j)_d(\vx, \vx_d)$ as real valued functions with real vector variables $\vx_k$ for $k = 0, \cdots, 7$.  Then we only need to apply ordinary nonlinear optimization techniques to analyze and to solve (\ref{e12}-\ref{e13}).   However, we may apply the approach in \cite{QLWZ22} to simplify the language.

In the following, as in \cite{QLWZ22}, we say that the real valued functions $f, f_d, h_j, (h_j)_d$ are continuously differentiable, if they are continuously differentiable with respect to real vector variables $\vx_k$ for $k = 0, \cdots, 7$.   We now take $f$ as an example, and the same argument applies to $f_d, h_j$ and $(h_j)_d$.  Let $\mathbb H = {\mathbb Q}^n \times {\mathbb Q}^n$.  Denote the partial derivatives and gradient of $f$ at $(\vx, \vx_d)$ as
$${\partial \over \partial \vx}f(\vx, \vx_d) = {\partial \over \partial \vx_0}f(\vx, \vx_d) + {\partial \over \partial \vx_1}f(\vx, \vx_d) \ii + {\partial \over \partial \vx_2}f(\vx, \vx_d) \jj + {\partial \over \partial \vx_3}f(\vx, \vx_d) \kk,$$
$${\partial \over \partial \vx_d}f(\vx, \vx_d) = {\partial \over \partial \vx_4}f(\vx, \vx_d) + {\partial \over \partial \vx_5}f(\vx, \vx_d) \ii + {\partial \over \partial \vx_6}f(\vx, \vx_d) \jj + {\partial \over \partial \vx_7}f(\vx, \vx_d) \kk,$$
$$\nabla f(\vx, \vx_d) = \left({\partial \over \partial \vx}f(\vx, \vx_d), {\partial \over \partial \vx_d}f(\vx, \vx_d)\right).$$
Then $\nabla f(\vx, \vx_d) \in \mathbb H$.  With the R-linear independence concept and Theorem 4.3 of \cite{QLWZ22}, we may have the first optimality conditions for (\ref{e12}-\ref{e13}).    This is essentially the linear independence constraint qualification in real nonlinear optimization.  

\section{Equality Constrained Standard Dual Quaternion Optimization}

If $\hat f$ and $\hat h_j$ for $j = 1, \cdots, m$ are all standard dual quaternion functions, then
({\bf EQQOPI}) has the form
\begin{equation} \label{e23c}
\min \{ f(\vx) : h_j(\vx) = 0, (h_j)_d(\vx, \vx_d) = 0, \ j = 1, \cdots, m  \},
\end{equation}
and ({\bf EQQOPII}) has the form
\begin{equation} \label{e15}
\min \{ f_d(\vx, \vx_d) : f(\vx) = L^{I},  \ h_j(\vx) = 0, \ (h_j)_d(\vx, \vx_d) = 0, \ j = 1, \cdots, m  \}.
\end{equation}
Then we may have some better results.   See the theorem and its corollary below.  We call such a dual quaternion optimization problem a standard dual quaternion optimization problem.

\begin{Thm} \label{t4.2}
Suppose that $f$, $h_j$ for $j = 1, \cdots, m$ are continuously differentiable.

Assume $(\vx^{opt}, \vx_d^{opt})$ is an optimal solution of (\ref{e23c}).   If $\left\{ \nabla h_j(\vx^{opt}) : j = 1, \cdots, m  \right\}$ is R-linearly independent in the sense of \cite{QLWZ22}, and $\left\{ {\partial \over \partial \vx_d} (h_j)_d(\vx^{opt}, \vx_d^{opt}) : j = 1, \cdots, m  \right\}$ is R-linearly independent in the sense of \cite{QLWZ22}, then there are real Lagrangian multipliers $\lambda_j$ for $j = 1, \cdots, m$, such that
\begin{equation}
\nabla f(\vx^{opt}) + \sum_{j=1}^m \lambda_j \nabla h_j(\vx^{opt}) = 0,
\end{equation}
and
\begin{equation}
 h_j(\vx^{opt}) = 0, \  (h_j)_d(\vx^{opt}, \vx_d^{opt}) = 0, \ {\rm for} \ j = 1, \cdots, m.
\end{equation}

Assume $(\vx^{opt}, \vx_d^{opt})$ is an optimal solution of (\ref{e15}).   If $\left\{ \nabla f(\vx^{opt}), \nabla h_j(\vx^{opt}) : j = 1, \cdots, m  \right\}$ is R-linearly independent in the sense of \cite{QLWZ22}, and $\left\{ {\partial \over \partial \vx_d} (h_j)_d(\vx^{opt}, \vx_d^{opt}) : j = 1, \cdots, m  \right\}$ is R-linearly independent in the sense of \cite{QLWZ22}, then there are real Lagrangian multipliers $\sigma$ and $\lambda_j$ for $j = 1, \cdots, m$, such that
\begin{equation}
\nabla f_d(\vx^{opt}, \vx_d^{opt}) + \sigma \nabla f(\vx^{opt}) + \sum_{j=1}^m \lambda_j \nabla h_j(\vx^{opt}) = 0,
\end{equation}
and
\begin{equation}
 f(\vx^{opt}) = L^{I},\ h_j(\vx^{opt}) = 0, \  (h_j)_d(\vx^{opt}, \vx_d^{opt}) = 0, \ {\rm for} \ j = 1, \cdots, m.
\end{equation}
\end{Thm}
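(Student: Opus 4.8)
The plan is to reduce both assertions to the first-order optimality theorem for equality-constrained quaternion optimization, Theorem 4.3 of \cite{QLWZ22}, applied to the quaternion problems (\ref{e23c}) and (\ref{e15}) in the $2n$ quaternion variables $(\vx,\vx_d)\in\mathbb{H}$. The only point that is not a direct citation is that the hypotheses here are split into two \emph{separate} R-linear independence conditions, one on the standard-part gradients $\nabla h_j(\vx^{opt})$ and one on the dual-part partials $\frac{\partial}{\partial\vx_d}(h_j)_d(\vx^{opt},\vx_d^{opt})$, whereas \cite{QLWZ22} asks for R-linear independence of the whole active gradient system. So the first task is to manufacture that constraint qualification from the split hypotheses.

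First I would record the block form of the gradients forced by standardness. Since $\hat f$ and the $\hat h_j$ are standard, $f$ and $h_j$ depend on $\vx$ alone, so $\nabla h_j(\vx^{opt})=\left(\frac{\partial}{\partial\vx}h_j,\,0\right)$ has a vanishing $\vx_d$-block, while $\nabla(h_j)_d=\left(\frac{\partial}{\partial\vx}(h_j)_d,\,\frac{\partial}{\partial\vx_d}(h_j)_d\right)$ is generically full. To check the constraint qualification for (\ref{e23c}), I would take a vanishing real combination $\sum_j a_j\nabla h_j+\sum_j b_j\nabla(h_j)_d=0$: reading off its $\vx_d$-block gives $\sum_j b_j\frac{\partial}{\partial\vx_d}(h_j)_d=0$, which forces all $b_j=0$ by the second R-linear independence hypothesis, and the remaining $\vx$-block $\sum_j a_j\frac{\partial}{\partial\vx}h_j=0$ then forces all $a_j=0$ by the first. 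This block-triangular reading establishes R-linear independence of the full system, so Theorem 4.3 of \cite{QLWZ22} applies and produces real multipliers $\lambda_j$ for the $h_j$-constraints and $\nu_j$ for the $(h_j)_d$-constraints.

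Next I would use the very same block structure to prune the multipliers in the first case. In the stationarity identity for (\ref{e23c}) the $\vx_d$-block receives no contribution from the objective $f$ or from the $h_j$, both being $\vx_d$-free, so it reads $\sum_j\nu_j\frac{\partial}{\partial\vx_d}(h_j)_d=0$; R-linear independence again forces every dual-constraint multiplier $\nu_j$ to vanish, and what survives in the $\vx$-block is precisely $\nabla f(\vx^{opt})+\sum_j\lambda_j\nabla h_j(\vx^{opt})=0$, with feasibility inherited verbatim from the constraints. For (\ref{e15}) I would rerun the construction with the enlarged family $\{\nabla f,\nabla h_j\}$ playing the role of the $\vx$-block generators, so that the constraint $f(\vx)=L^{I}$ contributes the extra multiplier $\sigma\nabla f(\vx^{opt})$; the constraint qualification is the identical triangular computation with $\nabla f$ adjoined, and Theorem 4.3 of \cite{QLWZ22} then delivers the stationarity and feasibility relations in the stated form.

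I expect two obstacles. The first is the constraint-qualification step itself: R-linear independence in the sense of \cite{QLWZ22} is a condition over $\Re$ on quaternion-valued gradients, so I must justify carefully that projecting a vanishing \emph{real} combination onto the $\vx_d$-block and then onto the $\vx$-block is legitimate and that the two hypotheses combine in the claimed triangular fashion. The second, more delicate, point is specific to (\ref{e15}): because the objective $f_d$ itself depends on $\vx_d$, the $\vx_d$-block of its stationarity reads $\frac{\partial}{\partial\vx_d}f_d+\sum_j\nu_j\frac{\partial}{\partial\vx_d}(h_j)_d=0$, which \emph{determines} the dual-constraint multipliers rather than annihilating them as in (\ref{e23c}); reconciling these surviving terms with the stated optimality identity is the crux of the second half of the theorem, and is where I would concentrate the careful bookkeeping. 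Once these are settled, both conclusions follow mechanically from the quaternion KKT theorem together with the single structural fact that standardness annihilates the $\vx_d$-block of $\nabla f$ and of every $\nabla h_j$.
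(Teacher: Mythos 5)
Your handling of the first assertion is correct and is precisely the paper's own proof: the split R-linear independence hypotheses are combined by the block-triangular reading (project a vanishing real combination onto the $\vx_d$-block first, then onto the $\vx$-block) to get the constraint qualification for the full system, Theorem 4.3 of \cite{QLWZ22} supplies multipliers $\lambda_j,\mu_j$ for all $2m$ constraints, and because $f$ and the $h_j$ are $\vx_d$-free the $\vx_d$-block of stationarity collapses to $\sum_{j=1}^m \mu_j \frac{\partial}{\partial \vx_d}(h_j)_d(\vx^{opt},\vx_d^{opt})=0$, which annihilates every $\mu_j$. Nothing to add there.

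The second assertion is where your proposal stops, and the obstacle you flag is not deferred bookkeeping that can be settled later: it is fatal to the direct argument, and it is equally fatal to the paper's proof, which disposes of this case with the single sentence ``The second result can be derived similarly.'' It cannot. For (\ref{e15}) the $\vx_d$-block reads $\frac{\partial}{\partial\vx_d}f_d + \sum_j \mu_j \frac{\partial}{\partial\vx_d}(h_j)_d = 0$, which (as you observe) \emph{determines} the $\mu_j$ rather than killing them, so the terms $\sum_j \mu_j \nabla (h_j)_d$ cannot be deleted from stationarity; since $\nabla f$ and $\nabla h_j$ have zero $\vx_d$-blocks, the identity as stated would force $\frac{\partial}{\partial\vx_d}f_d(\vx^{opt},\vx_d^{opt})=0$, which optimality does not deliver because $\vx_d$ is pinned by the constraints $(h_j)_d=0$. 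There is also a deeper defect worth noting, which explains why no amount of reconciliation can work: the hypotheses of the second assertion are incompatible with the first assertion. Any point feasible for (\ref{e15}) satisfies the constraints of (\ref{e23c}) and attains $f=L^{I}$, hence is an optimal solution of (\ref{e23c}); R-linear independence of $\left\{\nabla f(\vx^{opt}),\nabla h_j(\vx^{opt})\right\}$ implies that of $\left\{\nabla h_j(\vx^{opt})\right\}$, so the first assertion applies and yields $\nabla f(\vx^{opt})+\sum_j\lambda_j\nabla h_j(\vx^{opt})=0$, a nontrivial vanishing real combination of $\left\{\nabla f,\nabla h_j\right\}$ --- contradicting the assumed independence. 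So the second half of the theorem is vacuously true and has no content as stated; the honest repair is to weaken that hypothesis and to state the conclusion with the $(h_j)_d$-multipliers retained, i.e.\ $\nabla f_d + \sigma\nabla f + \sum_{j=1}^m\left[\lambda_j\nabla h_j + \mu_j\nabla (h_j)_d\right]=0$ with the $\mu_j$ determined by the $\vx_d$-block equation. Your diagnosis of the crux was exactly right; your stated plan to make the surviving terms fit the given identity cannot succeed.
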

\begin{proof}  By Theorem 4.3 of \cite{QLWZ22}£¬ and the property of block triangular matrices, if $\left\{ \nabla h_j(\vx^{opt}) : j = 1, \cdots, m  \right\}$ is R-linearly independent in the sense of \cite{QLWZ22}, and $\left\{ {\partial \over \partial \vx_d} (h_j)_d(\vx^{opt}, \vx_d^{opt}) : j = 1, \cdots, m  \right\}$ is R-linearly independent in the sense of \cite{QLWZ22}, then there are real Lagrangian multipliers $\lambda_j$ and $\mu_j$ for $j = 1, \cdots, m$, such that
\begin{equation}
\nabla f(\vx^{opt}) + \sum_{j=1}^m \left[\lambda_j \nabla h_j(\vx^{opt}) + \mu_j {\partial \over \partial \vx} (h_j)_d(\vx^{opt}, \vx_d^{opt})\right] = 0,
\end{equation}
\begin{equation} \label{ea.1}
\sum_{j=1}^m \mu_j {\partial \over \partial \vx_d} (h_j)_d(\vx^{opt}, \vx_d^{opt}) = 0,
\end{equation}
and
\begin{equation}
 h_j(\vx^{opt}) = 0, \  (h_j)_d(\vx^{opt}, \vx_d^{opt}) = 0, \ {\rm for} \ j = 1, \cdots, m.
\end{equation}
Since $\left\{ {\partial \over \partial \vx_d} (h_j)_d(\vx^{opt}, \vx_d^{opt}) : j = 1, \cdots, m  \right\}$ is R-linearly independent in the sense of \cite{QLWZ22}, by (\ref{ea.1}), all $\mu_j = 0$.   We have the first result of this theorem.   The second result can be derived similarly.
\end{proof}

Actually, Theorem \ref{t4.2} may be stated and proved in the Language of real optimization.  That way is somewhat tedious.

\begin{Cor} \label{c4.3}
Let $m=1$ and denote $h \equiv h_1$.  Suppose that $f$ and $h$ are continuously differentiable.

Assume $(\vx^{opt}, \vx_d^{opt})$ is an optimal solution of (\ref{e23c}).  Assume that $ \nabla h(\vx^{opt}) \not = \0$, and ${\partial \over \partial \vx_d} h_d(\vx^{opt}, \vx_d^{opt}) \not = \0$.  Then there is a real Lagrangian multiplier $\lambda$, such that
\begin{equation}
\nabla f(\vx^{opt}) + \lambda \nabla h(\vx^{opt}) = 0, 
\end{equation}
and
\begin{equation}
 h(\vx^{opt}) = 0, \  h_d(\vx^{opt}, \vx_d^{opt}) = 0.
\end{equation}

Assume $(\vx^{opt}, \vx_d^{opt})$ is an optimal solution of (\ref{e15}).   If $\left\{ \nabla f(\vx^{opt}), \nabla h(\vx^{opt})  \right\}$ is R-linearly independent in the sense of \cite{QLWZ22}, then there are real Lagrangian multipliers $\sigma$ and $\lambda$, such that
\begin{equation}
\nabla f_d(\vx^{opt}, \vx_d^{opt}) + \sigma \nabla f(\vx^{opt}) + \lambda \nabla h(\vx^{opt}) = 0£¬  
\end{equation}
and
\begin{equation}
 f(\vx^{opt}) = L^{I},\ h(\vx^{opt}) = 0, \  h_d(\vx^{opt}, \vx_d^{opt}) = 0.
\end{equation}
\end{Cor}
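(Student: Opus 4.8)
The plan is to obtain Corollary \ref{c4.3} as the specialization $m = 1$ of Theorem \ref{t4.2}, so that no new analytic work is required. The only conceptual point beyond a direct substitution is the elementary observation that, for a collection consisting of a single vector in $\mathbb H$, R-linear independence in the sense of \cite{QLWZ22} is equivalent to that vector being nonzero: a relation $r\vv = \0$ with $r \in \Re$, $r \neq 0$ forces $\vv = \0$, while $\vv = \0$ produces the nontrivial relation $1\cdot\vv = \0$; hence $\{\vv\}$ is R-linearly independent if and only if $\vv \not= \0$. I would record this equivalence first, since it is what converts the singleton constraint qualifications of Theorem \ref{t4.2} into the nonvanishing hypotheses stated in the corollary.

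For the first assertion I would set $m = 1$ and write $h \equiv h_1$, $\lambda \equiv \lambda_1$ in the first part of Theorem \ref{t4.2}. Under the equivalence above, the hypothesis that $\left\{ \nabla h_1(\vx^{opt}) \right\}$ is R-linearly independent is exactly $\nabla h(\vx^{opt}) \not= \0$, and the hypothesis that $\left\{ {\partial \over \partial \vx_d}(h_1)_d(\vx^{opt}, \vx_d^{opt}) \right\}$ is R-linearly independent is exactly ${\partial \over \partial \vx_d} h_d(\vx^{opt}, \vx_d^{opt}) \not= \0$. These match the stated assumptions verbatim, so the conclusion of Theorem \ref{t4.2} immediately delivers the single multiplier $\lambda$ together with the stationarity equation $\nabla f(\vx^{opt}) + \lambda \nabla h(\vx^{opt}) = 0$ and the feasibility relations $h(\vx^{opt}) = 0$ and $h_d(\vx^{opt}, \vx_d^{opt}) = 0$.

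For the second assertion I would likewise set $m = 1$ in the second part of Theorem \ref{t4.2}. The R-linear independence of $\left\{ \nabla f(\vx^{opt}), \nabla h(\vx^{opt}) \right\}$ is precisely the stated hypothesis, and, via the singleton equivalence, the remaining condition ${\partial \over \partial \vx_d} h_d(\vx^{opt}, \vx_d^{opt}) \not= \0$ supplies the constraint qualification on the dual constraint. Theorem \ref{t4.2} then produces multipliers $\sigma$ and $\lambda$ satisfying $\nabla f_d(\vx^{opt}, \vx_d^{opt}) + \sigma \nabla f(\vx^{opt}) + \lambda \nabla h(\vx^{opt}) = 0$ along with $f(\vx^{opt}) = L^{I}$, $h(\vx^{opt}) = 0$ and $h_d(\vx^{opt}, \vx_d^{opt}) = 0$.

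The main, and essentially only, obstacle is bookkeeping around the constraint qualifications, and here one subtlety deserves care. In the second part, Theorem \ref{t4.2} invokes two independence conditions, whereas the corollary as stated lists only the R-linear independence of $\left\{ \nabla f(\vx^{opt}), \nabla h(\vx^{opt}) \right\}$. I would therefore make explicit that the dual condition ${\partial \over \partial \vx_d} h_d(\vx^{opt}, \vx_d^{opt}) \not= \0$ must also be in force, since the block-triangular argument used in the proof of Theorem \ref{t4.2} relies on it precisely to conclude that the auxiliary multiplier $\mu$ vanishes; if this is not intended as inherited from the first part, it should be added as an explicit hypothesis. With that caveat settled, the corollary is a routine instantiation.
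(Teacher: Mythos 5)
Your proposal is correct and follows exactly the route the paper intends: the corollary is stated without its own proof precisely because it is the $m=1$ instantiation of Theorem \ref{t4.2}, with the singleton R-linear-independence conditions translated into the nonvanishing of $\nabla h(\vx^{opt})$ and ${\partial \over \partial \vx_d} h_d(\vx^{opt}, \vx_d^{opt})$, which is the elementary equivalence you record. Your caveat about the second part is also well taken: as written, the corollary omits the hypothesis ${\partial \over \partial \vx_d} h_d(\vx^{opt}, \vx_d^{opt}) \not= \0$ that the second part of Theorem \ref{t4.2} needs in order to force the auxiliary multiplier $\mu$ to vanish, so it should indeed be stated explicitly or understood as implicitly in force.
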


In practice, we may only find an approximal value of $L^{I}$, an error analysis is necessary in the application of this approach.

This approach cannot be extended to the inequality constraint case naturally, the optimal index set there is dependent upon the concrete solution.


\section{Hand-Eye Calibration}

 The hand-eye calibration problem (the sensor-actuator problem) is an important application problem in robot research.     They can be solved by using dual quaternions \cite{Da99, LWW10, LLDL18}.  In this section, we formulate it as an equality constrained dual quaternion optimization problem, and show that this problem is a standard dual quaternion optimization problem.

In 1989, Shiu and Ahmad \cite{SA89} and Tsai and Lenz \cite{TL89} formulated the hand-eye calibration problem as a matrix equation
\begin{equation} \label{4.1}
AX=XB,
\end{equation}
where $X$ is the transformation matrix from the camera (eye) to the gripper (hand), $A = A_2A_1^{-1}$ and $B = B_2^{-1}B_1$, $A_i$ is the transformation matrix from the camera to the world coordinate system, and $B_i$ is the transformation matrix from
the robot base to the gripper \cite{Da99}.  They are all $4 \times 4$ matrices, $A_i$ and $B_i$ can be measured, while $X$ is the variable matrix.   Assume that there are $n+1$ measurements (poses) $A_i$ and $B_i$ for $i = 1, \cdots, n+1$.  Denote $A^{(i)} = A_{i+1}A_i^{-1}$ and $B^{(i)} = B_{i+1}^{-1}B_i$, for $i = 1, \cdots, n$.   Then the problem is to find the best solution $X$ of
 \begin{equation} \label{4.2}
A^{(i)}X=XB^{(i)},
\end{equation}
for $i= 1, \cdots, n$.
In 1999, Daniilidis \cite{Da99} proposed to use dual quaternions to solve this problem.   See Figure \ref{AXXB} for the geometry of this model.

\begin{figure}
  \centering
  \includegraphics[width=0.8\textwidth]{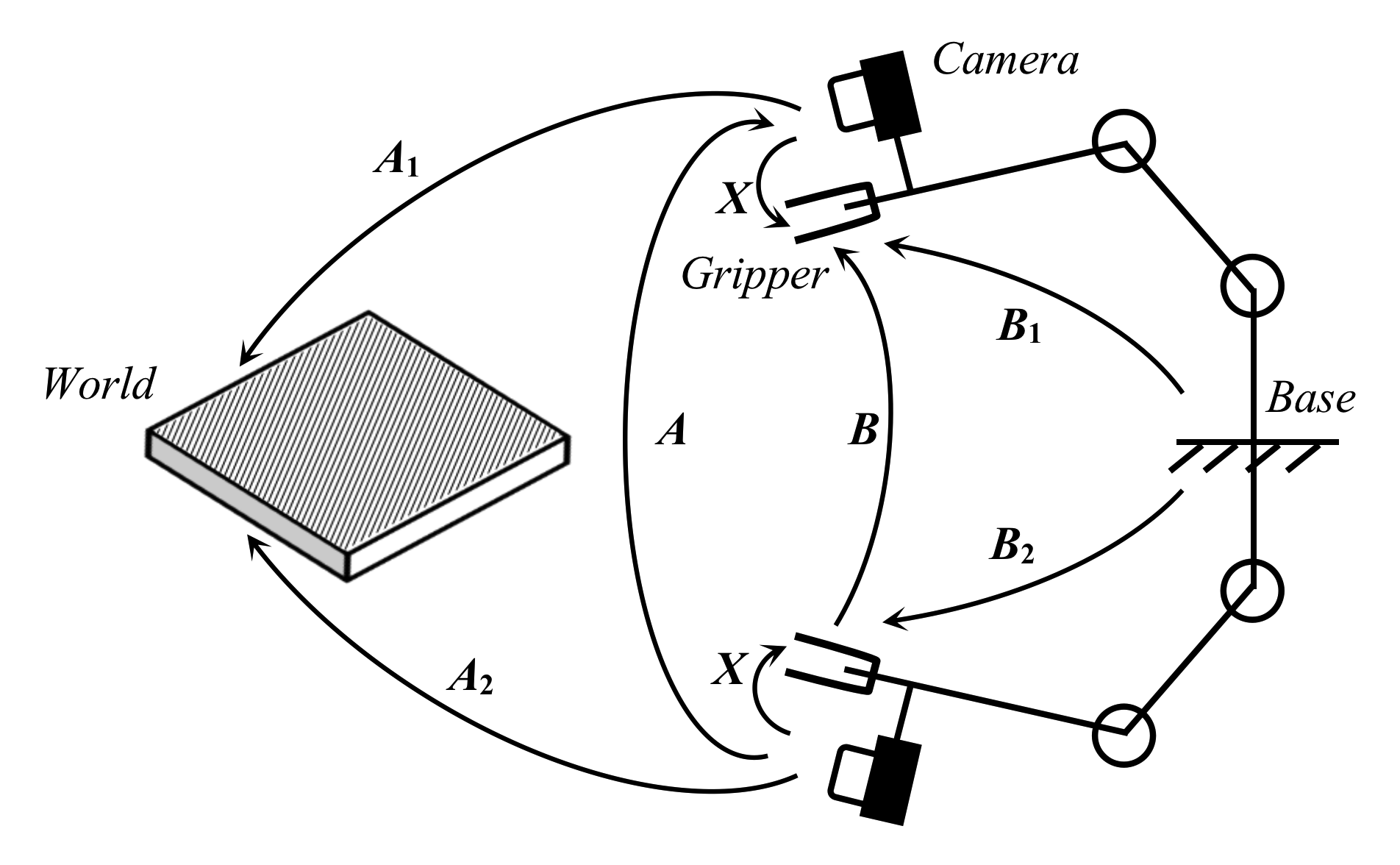}\\
  \caption{$AX=XB$ Hand-Eye Calibration Model.}\label{AXXB}
\end{figure}

In 1994, Zhuang, Roth and Sudhaker \cite{ZRS94} generalized (\ref{4.1}) to
\begin{equation} \label{4.3}
AX=YB,
\end{equation}
where $Y$ is the transformation matrix from the world coordinate system to the robot base.   Assume that we have $n$ measurements (poses) for $i = 1, \cdots, n$.   Then the problem is to find the best solution $X$ and $Y$ of
 \begin{equation} \label{4.4}
A_iX=YB_i,
\end{equation}
for $i= 1, \cdots, n$.   In 2010, Li, Wang and Wu \cite{LWW10} proposed to use dual quaternions to solve this problem.    Also see \cite{LLDL18}.    See Figure \ref{AXYB} for the geometry of this model.

\begin{figure}
  \centering
  \includegraphics[width=0.8\textwidth]{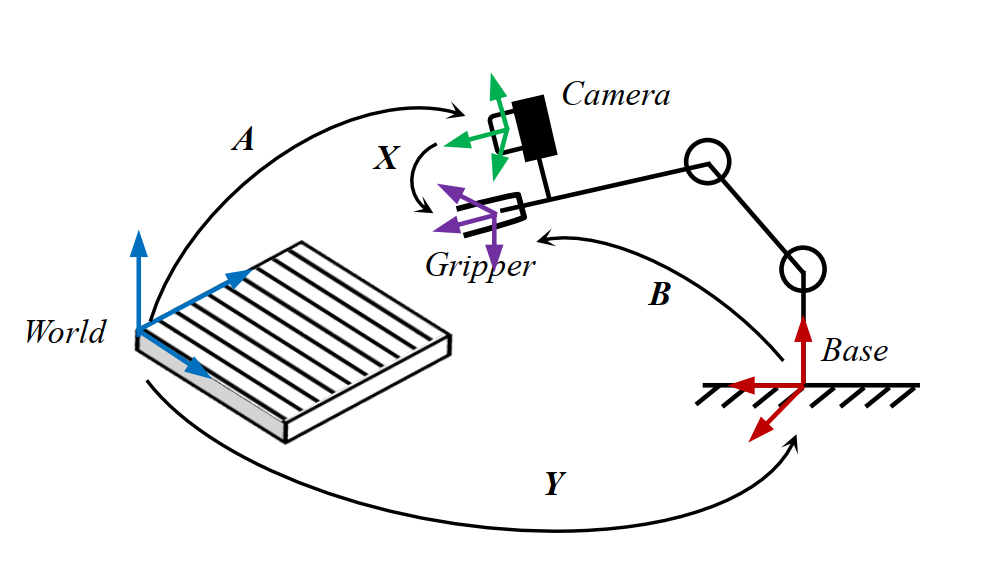}\\
  \caption{$AX=YB$ Hand-Eye Calibration Model.}\label{AXYB}
\end{figure}

As the 3D movement of a rigid body may also be expressed by unit dual quaternions, we may rewrite (\ref{4.2}) and (\ref{4.4}) as
 \begin{equation} \label{4.5}
\hat a^{(i)} \hat x = \hat x \hat b^{(i)},
\end{equation}
for $i= 1, \cdots, n$, and
 \begin{equation} \label{4.6}
\hat a_i \hat x = \hat y \hat b_i,
\end{equation}
for $i= 1, \cdots, n$, where $\hat a^{(i)}$, $\hat b^{(i)}$, $\hat a_i$, $\hat b_i$, $\hat x$ and $\hat y$ are corresponding unit dual quaternions.   We may solve (\ref{4.5}) and (\ref{4.6}) by the following minimization problems:
\begin{equation} \label{4.7}
\min \left\{ \sum_{i=1}^n | \hat a^{(i)} \hat x - \hat x \hat b^{(i)}| : |\hat x|^2 = 1 \right\},
\end{equation}
and
\begin{equation} \label{4.8}
\min \left\{ \sum_{i=1}^n | \hat a_i\hat x - \hat y \hat b_i| : |\hat x |^2 = 1,
|\hat y |^2 = 1 \right\}.
\end{equation}
Then (\ref{4.7}) and (\ref{4.8}) are equality constrained dual quaternion optimization problems.   By the analysis in Section 3, all the functions involved are standard dual quaternion optimization problems.  Thus, they are equality constrained standard dual quaternion optimization problems.

Note that by (\ref{magnitude}), the magnitudes in the objective functions of (\ref{4.7}) and (\ref{4.8}) cannot be replaced by their squares as for a dual quaternion number $\hat q$, $|\hat q | = 0$ and $|\hat q|^2 = 0$ are not equivalent.

Consider (\ref{4.7}).
We have $\hat f(\hat x) = \sum_{i=1}^n | \hat a^{(i)}\hat x - \hat x \hat b^{(i)}|$ and
$\hat h(\hat x) = |\hat x |^2 - 1$.   Then {\bf EQSDQO} has the form
\begin{equation} \label{5.1}
\min \{ \hat f(\hat x) :  \hat h(\hat x) = 0  \}.
\end{equation}
By (\ref{e9}) and (\ref{e10}),
$$h(x, x_d) = h(x) = | x |^2 - 1,$$
and
$$h_d(x, x_d) = xx_d^* + x_dx^*.$$
Similarly, we have
$$f(x, x_d) = f(x) = \sum_{i=1}^n |  a^{(i)} x -  x b^{(i)} |,$$
and $f_d(x, x_d)$ can be computed by (\ref{magnitude}).
Then,
({\bf EQQOPI}) has the form
\begin{equation}
\min \{ f(x) : h(x) = 0, h_d(x, x_d) = 0,  \},
\end{equation}
and ({\bf EQQOPII}) has the form
\begin{equation} \label{e24c}
\min \{ f_d(x, x_d) : f(x) = L^{I},  \ h(x) = 0, \ h_d(x, x_d) = 0 \}.
\end{equation}
We may apply Theorems \ref{t4.1} and \ref{t4.2} as well as Corollary \ref{c4.3} now.   In particular, we have $\nabla h(x^{opt}) = 2x^{opt} \not = 0$ as $|x^{opt}|^2 = 1$.  The problem (\ref{4.8}) can be treated similarly.

\section{Pose Graph Optimization}

The simultaneous localization and mapping (SLAM) problem is a very hot topic in robotic research \cite{CCCLSNRL16}.  The applications of SLAM include not only field robots, but also underwater navigation \cite{WND00}, unmanned aerial vehicle (UAV) \cite{BS07}, planetary exploration rover \cite{WJZ13}.  The graph-based approach poses the SLAM problem to the pose graph optimization problem.  In 2016, Cheng, Kim, Jiang and Che \cite{CKJC16} studied dual quaternion-based graph SLAM.   In this section, we show that the dual quaternion pose graph optimization problem is a standard dual quaternion optimization problem.

Pose graph optimization estimates $n$ robot poses from $m$ relative pose measurements.   The dual quaternion pose graph problem can be visualized as a directed graph $G = (V, E)$ \cite{CTDD15}, where each vertex $i \in V$ corresponds to a robot pose $\hat x_i$ for $i = 1, \cdots, n$, and each directed edge (arc) $(i, j) \in E$ corresponds to a relative measurement $\hat q_{ij}$, a unit dual quaternion.    Thus, $\hat x_i$ for $i = 1, \cdots, n$, are $n$ unknown unit dual quaternion variables.    The cardinality of the edge set $|E| = m$ as we have $m$ relative measurements.    By the pose relation, the relative pose $\hat y_{ij}$ should be
$$\hat y_{ij} = \hat x_i^* \hat x_j.$$
Then the error at the edge $(i, j)$ is
$$\hat e_{ij} = \hat q_{ij} - \hat x_i^* \hat x_j.$$

We may regard $\hat \ve = (\hat e_{ij} : (i, j) \in E)$ as an $m$-dimensional dual quaternion vector, i.e., $\hat \ve \in {\hat {\mathbb Q}}^m$.   Then we have the following minimization model for this pose graph optimization problem:
\begin{equation} \label{7.1}
\min \left\{ \|\hat \ve \|_2 : |\hat x_i|^2 = 1, i = 1, \cdots, n \right\},
\end{equation}
where $\|\hat \ve \|_2$ is defined by (\ref{e9}) and (\ref{e10}).   By the discussion in Section 3, this is an equality constrained standard dual quaternion optimization problem.

We will study the algorithm aspect of applying standard dual quaternion optimization approach to the hand-eye calibration problem and the SLAM problem  in further coming  papers.

\bigskip

{\bf Acknowledgment}  I am thankful to Chen Ling, Ziyan Luo and Zhongming Chen for the discussion on standard dual quaternion optimization, to Wei Li for the discussion on hand-eye calibration, to Jiantong Cheng for the discussion on SLAM, to Guyan Ni for introducing Jiantong Cheng to me, and to Chen Ouyang and Jinjie Liu for Figures 1 and 2.

\bigskip





\end{document}